\documentclass[11pt]{amsart}

\usepackage{amsmath,amsthm,amssymb,mathrsfs,amsfonts,verbatim,enumitem,color,leftidx}
\usepackage{mathabx}
\usepackage{etoolbox, tikz} 
\usepackage{bbm}
\usepackage[all,tips]{xy}
\usepackage{graphicx,ifpdf}
\usepackage{here}
\ifpdf
   \DeclareGraphicsRule{*}{mps}{*}{}
\fi
\usepackage{hyperref}
\usepackage[right,displaymath,mathlines]{lineno}

\allowdisplaybreaks[0]
\newtheorem{thm}{Theorem}[section]
\newtheorem{lem}[thm]{Lemma}
\newtheorem{coro}[thm]{Corollary}
\newtheorem{prop}[thm]{Proposition}

\theoremstyle{definition}
\newtheorem{de}[thm]{Definition}

\theoremstyle{remark}

\numberwithin{equation}{section}


\makeatletter

\newcommand{\Rmnum}[1]{\expandafter\@slowromancap\romannumeral #1@}
\makeatother

\newcommand{\Z}{\mathbb{Z}}

\newcommand{\cT}{\mathcal{T}}

\newcommand{\cX}{\mathcal{X}}



\linespread{1.2} 

\begin{document}
\date{January 28, 2019}
\subjclass[2000]{Primary   37A10, 37D40; Secondary 60G70.}

\title[Limiting distribution of geodesics]{Limiting distribution of geodesics in a geometrically finite quotients of regular trees}
\author{\scshape Sanghoon Kwon} 
\author{\scshape Seonhee Lim}
\maketitle{}


\thanks{}




\begin{abstract}
In this article, we prove an extreme value theorem on the limit distribution of geodesics in a geometrically finite quotient of $\Gamma\backslash\mathcal{T}$ a locally finite tree. Main examples of such graphs are quotients of a Bruhat-Tits tree $\mathcal{T}$ by non-cocompact discrete subgroups $\Gamma$ of $PGL(2,\mathbf{K})$ of a positive characteristic local field $\mathbf{K}$. We investigate, for a given time $T$, the measure of the set of $\Gamma$-equivalent geodesic classes which stay up to time $T$ the region of distance $d$ at most $N$ depending on $T$ from a fixed compact subset $D$ of $\Gamma\backslash\mathcal{T}$. Namely, for Bowen-Margulis measure $\mu$ on the space $\Gamma\backslash\mathcal{GT}$ of geodesics and the critical exponent $\delta$ of $\Gamma$, we show that there exists a constant $C$ depending on $\Gamma$ and $D$ such that $$\lim_{T\to\infty}\mu\left(\left\{[l]\in\Gamma\backslash\mathcal{GT}\colon \underset{0\le t \le T}{\textrm{max}}d(D,l(t))\le N+y\right\}\right)=e^{-q^y/e^{2\delta y}}$$ with $$N=\log_{e^{2\delta/q}}\left(\frac{T(e^{2\delta-q)}}{2e^{2\delta}-C(e^{2\delta}-q)}\right).$$
\end{abstract} 




\markright{}
\section{Introduction}\label{sec:1}
Extreme value theory in probability theory has been developed for the last several decades. Whenever there is a random process, one can consider probabilistic questions such as central limit theorem, local limit theorem, law of large number, etc. One of the probabilistic questions is the extreme value distribution: given an $N$, either a constant or a function of time $T$, 
what is the probability that your given process up to time $T$ is of distance at most $N$ as $T$ tends to infinity? 

Recently, there has been a series of results on stationary stochastic processes arising from various chaotic dynamical systems such as random walks starting from \cite{Co} (see a survey paper \cite{Fr} and references therein).

One can ask a similar question for geodesics in a non-compact manifold with respect to a measure on the set of geodesics: 
\begin{center}
{\it What is the measure of the set of geodesic rays visiting \\
the region of distance at most $N$  from a fixed point in time $[0,T]$?
}
\end{center}
For the modular surface $\mathbb{H}^2/SL_2(\Z)$, a related question on continued fraction expansion was answered by Galambos \cite{Ga} whose result was used by Pollicott for the question on geodesics \cite{Po}.

In this article, we address the same question for quotient graphs of locally finite trees, which are the non-Archimedean analog of hyperbolic surfaces. We obtain an extreme value distribution for geometrically finite quotients of certain locally finite trees, in particular, regular trees. These include all the algebraic quotients of Bruhat-Tits tree of the group $PGL_2$ over positive characteristic local fields.

Let us state our main result. 
Given a locally finite tree $\mathcal{T}$, let $\textrm{Aut}(\mathcal{T})$ be the group of automorphisms of $\mathcal{T}$ and $\Gamma$ be a geometrically finite discrete subgroup of $\textrm{Aut}(\mathcal{T})$. Suppose that the quotient graph $\Gamma\backslash\mathcal{T}_{\textrm{min}}$ of the minimal $\Gamma$-invariant subtree $\mathcal T_{min}$ is a union of a finite graph with finitely many rays each of which is a ray of Nagao type as in Figure~\ref{fig:1} (see \cite{p04}), with rays each of which has \textit{edge-index alternating between $q$ and 1}. Main examples are geometrically finite discrete subgroups of a $(q+1)$-regular tree.  We denote by $\delta=\delta_\Gamma$ the critical exponent of $\Gamma$, which is defined by $$\delta=\delta_\Gamma=\underset{n\to\infty}{\overline{\lim}}\frac{\log\#\{\gamma\in\Gamma\colon d(x,\gamma x)\le n\}}{n}$$ for any fixed vertex $x\in V\mathcal{T}$. The value does not depend on the choice of $x\in V\mathcal{T}$.

\vspace{0.5em}
\begin{figure}[H]\label{fig:1}
\begin{center}
\begin{tikzpicture}[every loop/.style={}]
  \tikzstyle{every node}=[inner sep=0pt]
  \node (0) {} node [above=4pt] at (0.5,1.5) {};
  \node (2) at (1.5,0) {$\bullet$} node [below=4pt] at (1.5,0) {$$}; 
  \node (4) at (3,0) {$\bullet$}node [below=4pt] at (3,0) {$$}; 
  \node (6) at (4.5,0) {$\bullet$}node [below=4pt] at (4.5,0) {$$}; 
  \node (8) at (6,0) {$\cdots$}node [below=4pt] at (6,0) {$$}; 
  
  \path[-] 
 (2) edge node [above=4pt] {$1$ \quad \,\,$q$} (4)
(4) edge node [above=4pt] {$1$ \quad \,\,\,\,$q$} (6)
 (6) edge node [above=4pt] {$1$ \quad \,\,\,\,$q$} (8);
 
  \node (20) at (1.5,1) {$\bullet$} node [below=4pt] at (1.5,1) {$$}; 
  \node (40) at (3,1.1) {$\bullet$}node [below=4pt] at (3,1.1) {$$}; 
  \node (60) at (4.5,1.2) {$\bullet$}node [below=4pt] at (4.5,1.2) {$$}; 
  \node (80) at (6,1.3) {$\cdots$}node [below=4pt] at (6,1.3) {$$}; 
  
  \path[-] 
  (2) edge node [above=4pt] {  $$} (20)
 (20) edge node [above=4pt] {$1$ \quad \,\,$q$} (40)
(40) edge node [above=4pt] {$1$ \quad \,\,\,\,$q$} (60)
 (60) edge node [above=4pt] {$1$ \quad \,\,\,\,$q$} (80);

  \node (3) at (-1.5,0) {$\bullet$} node [below=4pt] at (-1.5,0) {$$}; 
  \node (5) at (-3,0) {$\bullet$}node [below=4pt] at (-3,0) {$$}; 
  \node (7) at (-4.5,0) {$\bullet$}node [below=4pt] at (-4.5,0) {$$}; 
  \node (9) at (-6,0) {$\cdots$}node [below=4pt] at (-6,0) {$$}; 

   \path[-] 
(3) edge node [above=4pt] {$q$ \quad \,\,$1$} (5)
(5) edge node [above=4pt] {$q$ \quad \,\,\,\,$1$} (7)
 (7) edge node [above=4pt] {$q$ \quad \,\,\,\,$1$} (9);

  \node (30) at (-1.5,1) {$\bullet$} node [below=4pt] at (-1.5,1) {$$}; 
  \node (50) at (-3,1.1) {$\bullet$}node [below=4pt] at (-3,1.1) {$$}; 
  \node (70) at (-4.5,1.2) {$\bullet$}node [below=4pt] at (-4.5,1.2) {$$}; 
  \node (90) at (-6,1.3) {$\cdots$}node [below=4pt] at (-6,1.3) {$$}; 

   \path[-] 
  (3) edge node [above=4pt] {$ $} (30)
  (3) edge node [above=4pt] {$ $} (2)
  (20) edge node [above=4pt] {$ $} (2)
  (30) edge node [below=8pt] {D} (20) 
 (30) edge node [above=4pt] {$q$ \quad \,\,$1$} (50)
(50) edge node [above=4pt] {$q$ \quad \,\,\,\,$1$} (70)
 (70) edge node [above=4pt] {$q$ \quad \,\,\,\,$1$} (90);

\end{tikzpicture}
\vspace{0.5em}
\caption{The quotient graph of a geometrically finite subgroup with compact part $D$}
\end{center}
\end{figure}

Let $h_T^{(l)}$ be the maximum of the \emph{height} of $l$ among $t\in[0,T]$, which is the distance from the compact part, say $D$, in Figure~\ref{fig:1}:
$$ h_T^{(l)} = \max_{0 \leq t \leq T} d(D, l(t)).$$
Let $\mu$ be the Bowen-Margulis measure. (See Definition~\ref{def:BM}.)
\begin{thm} Let $\Gamma$ be a geometrically finite discrete subgroup of $\textrm{Aut}(\mathcal{T})$ of a $(q+1)$-regular tree $\mathcal T$. There exists a constant $C=C(\Gamma)$ such that
$$\lim_{T\to\infty}\mu\left(\left\{[l]\in\Gamma\backslash\mathcal{GT}|h_T^{(l)}\le N+y\right\}\right)=e^{-q^y/e^{2\delta y}},$$
where $$N=\log_{e^{2\delta}/q}\left(\frac{T(e^{2\delta}-q)}{2e^{2\delta}-C(e^{2\delta}-q)}\right).$$
\end{thm}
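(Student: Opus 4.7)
The strategy is to decompose a generic geodesic into excursions into the Nagao cusps, establish a Gumbel law for the maximum excursion height via a Poisson approximation, and calibrate $N$ so that the resulting normalized maximum has a non-degenerate limit.

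\emph{Excursion decomposition.} A geodesic $l$ in $\Gamma\backslash\mathcal{T}_{\min}$ splits its time in $[0,T]$ between visits to the compact piece $D$ and excursions into one of the finitely many Nagao-type rays. Let $N_T(l)$ denote the number of complete excursions by time $T$ and let $H_1,\ldots,H_{N_T}$ be their maximum heights. Up to an additive error of size $O(\mathrm{diam}(D))$ we have $h_T^{(l)} = \max_{1 \le i \le N_T} H_i$.

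\emph{Tail of a single excursion.} Using the alternating edge-index structure $(1,q,1,q,\ldots)$ along a Nagao ray, the set of excursions that penetrate to depth $\ge h$ is an explicit combinatorial family whose Bowen--Margulis weight can be computed via a Patterson--Sullivan shadow-lemma argument. The branching factor $q$ going up the cusp combines with the shadow exponent $e^{-2\delta}$ to give, as $h\to\infty$,
\begin{equation*}
\mathbb{P}(H \ge h) = c_1\,(q\,e^{-2\delta})^{h}\bigl(1+o(1)\bigr),
\end{equation*}
with an explicit constant $c_1=c_1(\Gamma,D)>0$.

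\emph{Counting excursions.} By ergodicity of the geodesic flow with respect to $\mu$ (valid under geometric finiteness) and Kac's lemma applied to the return map to the unit tangent vectors based at $D$, one has $N_T(l)/T \to \lambda$ for $\mu$-a.e.\ $l$, with $\lambda>0$ depending explicitly on $\Gamma$ and $D$.

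\emph{Extreme value limit and calibration of $N$.} Exponential mixing of the geodesic flow on $\Gamma\backslash\mathcal{T}$ makes the heights $H_i$ asymptotically independent, and a Poisson approximation gives
\begin{equation*}
\mu\bigl(h_T^{(l)} \le N+y\bigr) = \exp\!\bigl(-N_T\,\mathbb{P}(H \ge N+y)+o(1)\bigr).
\end{equation*}
Substituting $N_T\sim\lambda T$ and the tail estimate above yields
\begin{equation*}
\mu\bigl(h_T^{(l)} \le N+y\bigr) \longrightarrow \exp\!\bigl(-\lambda c_1\,T\,(q/e^{2\delta})^{N+y}\bigr),
\end{equation*}
which equals $e^{-q^y/e^{2\delta y}}$ precisely when $\lambda c_1 T\,(q/e^{2\delta})^{N}=1$, i.e.\ $N = \log_{e^{2\delta}/q}(\lambda c_1 T)$. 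Evaluating the Patterson--Sullivan shadow constants and the Bowen--Margulis mass over $D$ identifies the prefactor $\lambda c_1$ with $(e^{2\delta}-q)/\bigl(2e^{2\delta}-C(e^{2\delta}-q)\bigr)$, where $C=C(\Gamma)$ encodes the geometric data of the compact part and its cuspidal boundary; this recovers the stated expression for $N$.

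\emph{Main obstacle.} The delicate step is the Poisson approximation. The excursion heights $H_i$ are not literally independent, and establishing asymptotic independence at the correct rate requires a quantitative mixing input (e.g.\ via transfer operators for the coded geodesic flow on the tree) combined with a Chen--Stein or Bonferroni second-moment estimate. Handling the incomplete excursion containing time $T$ and summing the contributions of different cusps are further book-keeping issues, but contribute only lower-order errors once $N$ is of logarithmic size in $T$.
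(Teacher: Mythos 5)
Your skeleton (excursion decomposition, tail estimate, excursion count, calibration of $N$) matches the paper's, but the step you yourself flag as the ``main obstacle'' --- the Poisson approximation for the maximum of the excursion heights --- is precisely where the proposal is not a proof, and the route you propose for it is the one the authors explicitly rule out. You suggest obtaining asymptotic independence of the $H_i$ from exponential mixing plus a Chen--Stein/Bonferroni argument; the paper remarks at the start of Section 4 that the $\phi$-mixing property required to run the general extreme-value machinery (an error term $|\mu(A\cap T^{-n}B)-\mu(A)\mu(B)|$ controlled by the measures of $A$ and $B$) is \emph{not} available here, even though exponential mixing in the correlation-function sense is known. So as written, the crux of your argument rests on an input that is not known to hold.

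What actually closes the gap, and what the paper does, is that no Poisson approximation is needed: the excursion heights are \emph{exactly} independent under the Bowen--Margulis measure. This is Proposition 3.4 (independence of excursions) and its geometrically finite analogue built from Lemma 4.2: because all lifts of $v_0\cdots v_{N+1}$ issuing from a given return vertex carry the same Patterson--Sullivan mass, the conditional probability that the $k$-th excursion exceeds height $N$, given the entire past, is exactly $q^{N}e^{-2\delta N}$ (exactly $q^{-N}$ in the lattice case), with no error term and no unknown constant $c_1$. Hence $\mu\bigl(\max_{1\le j\le k}a_j\le N+y\bigr)=\bigl(1-q^{N+y}e^{-2\delta(N+y)}\bigr)^{k}$ identically, and taking $k\sim (e^{2\delta}/q)^{N}$ gives the Gumbel limit with no second-moment analysis. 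Your remaining steps are essentially the paper's: the conversion from excursion count to time $T$ is done via the law of large numbers and a central limit bound for $S_n^{(l)}=t_{2n}-t_1$, whose expectation $\bigl(\tfrac{2e^{2\delta}}{e^{2\delta}-q}+C_\Gamma\bigr)n$ identifies your prefactor $\lambda c_1$ and exhibits $C=C_\Gamma$ as the mean time per excursion spent in the compact part. So the fix is not a sharper mixing estimate but the observation that the tree structure and the conformality of the Patterson density make the excursion process genuinely i.i.d.
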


If $\Gamma$ is a lattice subgroup of $\textrm{Aut}(\mathcal{T})$, then $\delta=\log q$ and $\mu$ is $Aut(\mathcal T)$-invariant. Moreover, if the quotient itself is a ray of Nagao type, then the constant $C$ is equal to 0. (See the proof in Section~\ref{sec:2}). The main example of $\Gamma$ is $PGL_2\left(\mathbb{F}_q[t]\right)$ sitting in $PGL_2\left(\mathbb{F}_q(\!(t^{-1})\!)\right)$ (see Section~\ref{sec:2}).  This yields the following corollary.
\begin{coro} Suppose that $\Gamma$ is a discrete subgroup of $\textrm{Aut}(\mathcal{T})$ such that the edge-indexed graph associated to $\mathcal{T}//\Gamma$ is equal to the ray $\cX$ of Nagao type. Then we have
$$\lim_{T\to\infty}\mu\left(\left\{[l]\in\Gamma\backslash\mathcal{GT}|h_{T}^{(l)}\le N+y\right\}\right)=e^{-1/q^y}$$
with $$N=\log_q\left(\frac{T(q-1)}{2q}\right).$$
\end{coro}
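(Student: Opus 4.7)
The plan is to deduce the corollary directly from the main theorem by specializing the two quantities $\delta$ and $C$ that appear in the general formula to their values in the Nagao-ray case. No new dynamical input is needed; only the identification of these two constants.

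First, I would invoke the remark preceding the corollary: whenever the edge-indexed quotient $\mathcal{T}//\Gamma$ is a ray of Nagao type, the group $\Gamma$ is (conjugate to) a lattice in $\textrm{Aut}(\mathcal{T})$, the canonical example being $PGL_2(\mathbb{F}_q[t])$ sitting inside $PGL_2(\mathbb{F}_q(\!(t^{-1})\!))$. From the lattice property and tree-lattice counting on a $(q+1)$-regular tree one obtains $\delta_\Gamma = \log q$, hence $e^{2\delta} = q^2$. The remark further asserts that in the pure Nagao-ray case the constant $C = C(\Gamma)$ in the main theorem vanishes, because the compact part $D$ degenerates to the base vertex of the ray and the boundary-correction terms that produce $C$ all disappear.

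Second, substituting $e^{2\delta} = q^2$ and $C = 0$ into the two formulas of the main theorem gives the corollary at once. The base of the logarithm becomes $e^{2\delta}/q = q^2/q = q$, and its argument simplifies as
\[
\frac{T(e^{2\delta}-q)}{2e^{2\delta} - C(e^{2\delta}-q)} \;=\; \frac{T(q^2-q)}{2q^2} \;=\; \frac{T(q-1)}{2q},
\]
which is the stated $N$. On the right-hand side of the limit,
\[
e^{-q^y/e^{2\delta y}} \;=\; e^{-q^y/q^{2y}} \;=\; e^{-1/q^y},
\]
as claimed.

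The main obstacle is not in the corollary itself, which is a pure substitution, but in justifying the two inputs pulled from the remark, in particular $C(\Gamma)=0$. This requires tracing through the construction of $C$ inside the proof of the main theorem to confirm that every finite-graph correction term vanishes once the core $D$ collapses to the base vertex of the Nagao ray; the identification $\delta=\log q$ for lattices is comparatively routine. Once both facts are in hand, the rest is arithmetic.
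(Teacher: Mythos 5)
Your proposal is correct and follows the same route the paper indicates for this corollary: substitute $\delta=\log q$ and $C=0$ into the main theorem, with the arithmetic exactly as you carried it out. The two inputs you flag as needing justification are indeed supplied by the paper: $C_\Gamma$ is defined in Section~\ref{sec:4} as the expected time spent in the compact part between consecutive excursions, which vanishes when $D$ degenerates to the single vertex $v_0$, and this is consistent with the direct computation $T_n=2qn/(q-1)$ in Section~\ref{sec:2}, where the paper also gives an independent, self-contained proof of this very statement as Theorem~\ref{main}.
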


We remark that for quotient spaces of lattices in Lie groups, Kirsebom \cite{Ki} showed some estimates for the limiting distribution of the maximum height over a specific interval of indices with respect to certain sparse
subsequences of the one-parameter action.

The article is organized as follows. In Section~\ref{sec:3}, we review the Markov chain associated to the discrete time geodesic flow of edge-indexed graphs and the construction of Gibbs measures. In Section~\ref{sec:2}, we prove the extreme value distribution for the simplest case, the ray of Nagao type. We prove the extreme value distribution of geometrically finite quotients in Section~\ref{sec:4} using the theory of countable Markov chain and the result for the ray of Nagao type. We tried to write Section~\ref{sec:2} as self-contained as possible (without Markov chain) for the readers who are mainly interested in the modular ray.

\section{ Markov chain and Gibbs measures }\label{sec:3}
As in the introduction, let $\mathcal T$ be a locally finite tree and $\Gamma$ a discrete subgroup of $\textrm{Aut}(\mathcal{T})$. In this section, we do not need any assumption on the indices of the quotient graph $\Gamma\backslash\mathcal{T}_{\textrm{min}}$. Let $\delta=\delta_\Gamma$ be the critical exponent of $\Gamma$, which is defined by $$\delta=\delta_\Gamma=\underset{n\to\infty}{\overline{\lim}}\frac{\log\#\{\gamma\in\Gamma\colon d(x,\gamma x)\le n\}}{n}$$ for any fixed vertex $x\in V\mathcal{T}$. The value does not depend on the choice of $x\in V\mathcal{T}$.

\subsection{Bowen-Margulis measure}
In this subsection, we review the construction of geodesic flow invariant measure $\mu$ on the space of bi-infinite geodesics associated to a conformal family $\{\mu_x\}_{x\in V\mathcal{T}}$ of measures on the boundary $\partial_\infty\mathcal{T}$ at infinity. Such a measure $\mu$ is finite when $\Gamma$ is geometrically finite. For lattices of Nagao type, it coincides with Haar measure coming from $Aut(\mathcal T)$. 

The construction is similar to the construction of Bowen-Margulis measure from Patterson-Sullivan density, more generally that of Gibbs measures from conformal densities. (See \cite{Sull} for hyperbolic manifolds, \cite{Rob} for CAT(-1) spaces, and \cite{BPP} for trees.)

Let us fix a vertex $x \in V\cT$.
Let $\mathcal{GT}=\{l\colon \mathbb{Z}\to V\mathcal{T}, n\mapsto l_n\textrm{ isometry}\}$ be the space of bi-infinite geodesics and $\phi$ the discrete time geodesic flow on $\mathcal{GT}$ given by $\phi(l)(n)=l(n+1)$. Let $\mathcal{GT}^+=\{ l\colon \mathbb{Z}_{\ge0} \to V\cT, n \mapsto l_n \textrm{ isometry}\}$ be the space of geodesic rays and $\mathcal{GT}^+_x=\{ l \in\mathcal{GT}^+\,|\, l_0=x \}$ be the space of geodesic rays starting at $x$. 

Let $\partial_\infty\mathcal{T}$ be the Gromov boundary at infinity of $\mathcal{T}$. For a fixed a vertex $x\in V\mathcal{T}$, the Gromov boundary $\partial_\infty\mathcal{T}$ can be identified with $\mathcal{GT}_x^+$.

Let $\pi\colon\mathcal{T}\to\Gamma\backslash\mathcal{T}$ be the natural projection. It induces the natural projection map $\mathcal{GT}\to\Gamma\backslash\mathcal{GT}$ which we will also denote by $\pi$.

\begin{de}[Patterson-Sullivan density]\label{def:3.1}  Given $\omega\in\partial_\infty\mathcal{T}$ and $x,y\in V\mathcal{T}$, the Busemann cocycle $\beta_\omega(x,y)$ is defined as $d(x,z)-d(y,z)$ where $[x,\omega)\cap[y,\omega)=[z,\omega)$.
\begin{enumerate}
\item A \emph{Patterson density} of dimension $\delta$ for a discrete group $\Gamma<\textrm{Aut}(\mathcal{T})$ is a family of finite nonzero positive Borel measures $\{\mu_x\}_{x\in V\mathcal{T}}$ on $\partial_\infty\mathcal{T}$ such that for every $\gamma\in\Gamma$, for all $x,y\in\mathcal{T}$ and $\omega\in\partial_\infty\mathcal{T}$, 
$$\gamma_*\mu_x=\mu_{\gamma\cdot x}\qquad\textrm{and}\qquad \frac{d\mu_x}{d\mu_y}(\omega)=e^{-\delta\beta_\omega(x,y)}.$$
\item For $\Gamma$ geometrically finite, the Patterson density $\{\mu_x\}_{x\in V\mathcal{T}}$ of dimension $\delta=\delta_\Gamma$ is the (unique) weak-limit of $\mu_{x,s}$ as $s\to \delta^+$ where 
$$\mu_{x,s}=\frac{1}{\sum_{\gamma\in\Gamma}e^{-sd(s,\gamma x)}}\sum_{\gamma\in\Gamma}e^{-sd(x,\gamma x)}\delta_{\gamma x}$$ and $\delta_{\gamma x}$ is the Dirac mass at $\gamma x$ (\cite{hp07}). 
\end{enumerate}
\end{de} 

Now consider the set $\mathcal{GT}_x$ of bi-infinite geodesics which reaches $x$ at time zero. On the set $\mathcal{GT}_x$, we
define $\mu$ locally by $\mu_x \times \mu_x$:  for $D^-,D^+\subset\partial_\infty\mathcal{T}$ such that every geodesic line connecting a point in $D^-$ and $D^+$ passes through $x$, we define $$(\mu_x\times\mu_x)\left(\{ l \in \mathcal{GT}_x : l^-\in D^-\textrm{ and }l^+\in D^+\}\right)=C_x \mu_x(D^-)\mu_x(D^+)$$
on $\mathcal{GT}_x$. Here, $C_x$ is the normalizing constant such that $(\mu_x\times\mu_x)(\mathcal{GT}_x)=1$. 

Now we use a ramified covering argument: since there is a one-to-one correspondence between $\Gamma \backslash \mathcal{GT}$ and $\underset{[x]\in 
\Gamma\backslash V\mathcal{T}}{\coprod} \Gamma_x \backslash \mathcal{GT}_x$, take the sum of $\mu_x \times \mu_x$ and normalize.
\begin{de}[Bowen-Margulis measure]\label{def:BM}
For a measurable subset $E\subset \Gamma\backslash \mathcal{GT}$, define $\mu(E)$ to be 
$$\mu(E):=C_0 \sum_{[x]\in \Gamma\backslash V\mathcal{T}}\frac{1}{|\Gamma_x|}(\mu_x \times \mu_x)(\pi^{-1}E\cap \mathcal{GT}_x),$$ where $C_0=(\underset{[x]}{\sum}\frac{1}{|\Gamma_x|})^{-1}.$ 
\end{de}
Note that $C_0$ is chosen so that $\mu(\Gamma\backslash\mathcal{GT})=1$ and the quantity above is well-defined i.e. it depend only on the class of $x$. The measure $\mu$ is $\phi$-invariant. Indeed, any set can be decomposed into projection of cylinders of the form 
$E = \pi (C_E)$ with $C_E =\{ l \in \mathcal{GT}_x : l^- \in D^-, l^+ \in D^+ \}$ small enough so that $\pi$ is one-to-one on $C_E$. Note that the measure of such cylinders are $\phi$-invariant:
$$\mu (\phi^{-1} E) =  \frac{1}{|\Gamma_{x'}|} (\mu_{x'} \times \mu_{x'} )( \pi^{-1} \phi^{-1} E \cap \mathcal{GT}_{x'}) 
= \frac{1}{|\Gamma_{x'}|} |\Gamma_{x'}|(\mu_{x'} \times \mu_{x'} )(C_{ \phi^{-1} E} \cap \mathcal{GT}_{x'}) 
$$
$$ =  \frac{1}{|\Gamma_{x}|} |\Gamma_x|( \mu_{x} \times \mu_{x}) (C_E \cap \mathcal{GT}_{x}) = \frac{1}{|\Gamma_{x}|}(\mu_{x} \times \mu_{x}) (\pi^{-1} E \cap \mathcal{GT}_{x}) = \mu(E),$$
where $x'$ is the base point of the elements of $\phi^{-1}E$.
The third equality is by definition of $\mu_x \times \mu_x$.
Another way of seeing $\phi$-invariance is to observe that $\mu$ is a Gibbs measure for dicrete time geodesic flow $\phi$. Compared with Proposition 4.13 of \cite{BPP}.

\subsection{Markov chain of $\Gamma_f \backslash \mathcal{GT}$}
In this subsection, we explain a way to obtain Markov chain associated to the geodesic flow on the compact part. First enlarge the given geometrically finite group $\Gamma$ to the full group $\Gamma_f$ associated to $\Gamma$, which is defined as the group maximal with the property that the quotient graph $\Gamma \backslash \cT$ coincides with $\Gamma_f \backslash \cT$ \cite{bm96}, namely $$\Gamma_f=\{g\in\textrm{Aut}(\mathcal{T})\,|\,\pi\circ g=\pi\}.$$
Note that $\Gamma \backslash \mathcal {GT}$ and $\Gamma_f \backslash \mathcal {GT}$ can be very different.
 We will define a Markov chain of $\Gamma_f \backslash \cT$ coding the geodesic flow. We remark that $\Gamma_f$ is not necessarily virtually discrete, thus, the Markov chain of $\Gamma_f \backslash \cT$ does not necessarily give a Markov chain of $\Gamma \backslash \cT$ coding the geodesic flow. However, the quotient graphs are identical, thus the extreme value condition for $\Gamma_f$ holds if and only if the same condition holds for $\Gamma$ if we consider the measure on $\Gamma \backslash \mathcal {GT}$ induced from $\Gamma_f \backslash \mathcal {GT}$.

More precisely, let $\mu$ be the Bowen-Margulis measure defined in Definition~\ref{def:BM}. Denote by $p$ the natural projection $\Gamma\backslash\mathcal{GT}\to\Gamma_f\backslash\mathcal{GT}$ such that $\phi \circ p = p \circ \phi$. It is an important fact that the set $\left\{[l]\in\Gamma\backslash\mathcal{GT}\colon h_{T}^{(l)}\le N+y\right\}$ is invariant under the associated full group $\Gamma_f$. Thus, if we denote by $\overline{\mu}$ the measure on $\Gamma_f\backslash\mathcal{GT}$ given by $\overline{\mu}(E)=\mu(p^{-1}(E))$, then it suffices to consider the limiting distribution of $$\overline{\mu}\left(\left\{[l]\in\Gamma\backslash\mathcal{GT}_f\colon h_{T}^{(l)}\le N+y\right\}\right).$$

Now we introduce the Markov chain associated to the discrete time geodesic flow. 

Given an undirected graph $A$, let $EA$ the set of all oriented edges where every edge of $A$ is bi-directed. The cardinality of $EA$ is twice the number of edges of $A$. For $e\in EA$, let $\partial_0e$ and $\partial_1e$ be the initial vertex and the terminal vertex of $e$, respectively. Denote by $\overline{e}$ be the opposite edge of $e$ satisfying $\partial_i\overline{e}=\partial_{1-i}e$ for $i=0,1$. An \emph{edge-indexed graph} $(A,i)$ is a bi-directed graph $A$ together with a map $i\colon EA\to\mathbb{Z}_{\ge 0}$ assigning a positive integer to each oriented edge.

For a given edge-indexed graph $(A,i)$, consider the following subset $$X_{(A,i)}=\{x=(e_j)_{j\in\mathbb{Z}}\,|\,\partial_0e_{j+1}=\partial_1e_j\textrm{ and if } e_{j+1}=\overline{e_j},\textrm{ then }i^A(e_j)>1\}$$ of  
admissible paths in $(EA)^{\mathbb{Z}}$. The family of \emph{cylinders} 
$$[e_0,\cdots,e_{n-1}]:=\{x\in X_{(A,i)}\colon x_i=e_i,i=0,\cdots,n-1\}$$ is a basis of open sets for a topology on $X$. Let $\sigma\colon X_{(A,i)}\to X_{(A,i)}$ be the shift given by $\sigma(x)_i:=x_{i+1}$. Then $(\mathcal{GT},\phi)$ is conjugate to $(X_\mathcal{T},\sigma)$ (Consider $\mathcal{T}$ as $(\mathcal{T},i^0)$ with $i^0(e)=1,\forall e\in E\mathcal{T}$). If $(A,i)$ is the edge-indexed graph associated with the quotient graph of groups $\Gamma_f\backslash\!\backslash\mathcal{T}$ (see \cite{Se} for the definition of quotient graphs of groups), then we also have a bijection $\Phi\colon(\Gamma_f\backslash \mathcal{GT},\phi)\to( X_{(A,i)},\sigma)$ given by $\Phi([l])=(e_j)_{j\in\mathbb{Z}}$, $\partial_ie_j=l_{i+j}$ for all $j\in\mathbb{Z}$ and $i=0,1$, so that the following diagram commute (cf. \cite{bm96}).
\begin{equation*}
\begin{aligned}\label{1}
\xymatrix{ \Gamma\backslash \mathcal{GT} \ar[r]^{\,\, p\quad} \ar[d]_{\phi} & \Gamma_f\backslash \mathcal{GT} \ar[r]^{\Phi}\ar[d]_{\phi} & X_{(A,i)} \ar[d]^{\sigma}  \\ 
\Gamma\backslash \mathcal{GT} \ar[r]^{\,\,p\quad} & \Gamma_f\backslash \mathcal{GT} \ar[r]^{\Phi} & X_{(A,i)} \\
}
\end{aligned}
\end{equation*}

For $f\in E\mathcal{T}$, we denote the \emph{shadow} of an edge $f$ by $$\mathcal{O}(f)=\{\omega\in\partial_\infty\mathcal{T}\,|\, \exists \xi\in \mathcal{GT}\textrm{ such that }\xi_0=\partial_0f,\xi_1=\partial_1f\textrm{ and }\xi^+=\omega\}.$$ Let $[e_0,\ldots,e_{n-1}]$ be an admissible cylinder of $X_{(A,i)}$. Following \cite{bm96}, we define $\lambda$ by
$$\lambda([e_0,\cdots,e_{n-1}])=\frac{\mu_{\partial_0f_0}(\mathcal{O}(\overline{f_0}))\mu_{\partial_1f_{n-1}}(\mathcal{O}(f_{n-1}))}{|\Gamma_{f_0,\cdots,f_{n-1}}|}e^{-n\delta}$$ where $f_j$ is an oriented edge of $\mathcal{T}$ for which $\pi(f_j)=e_j$ and $\partial_1f_j=\partial_0f_{j+1}$ and $\Gamma_{f_0,\cdots,f_{n-1}}$ is the stabilizer group of $f_0,\ldots, f_{n-1}$ of $\Gamma$. This quantity does not depend on the choice of $f_j$.

 It has the Markov property, namely
\begin{equation}\label{2}
\sum_{e_j\colon\partial_0e_k=\partial_1e_j}\lambda([e_j,e_k]) =\lambda([e_k]), \;\; \mathrm{and} \;\;
\sum_{e_k\colon\partial_0e_k=\partial_1e_j}\lambda([e_j,e_k]) =\lambda([e_j]).
\end{equation}
 We also have
\begin{align}\label{3}
\int_{\Gamma_f\backslash \mathcal{GT}}f \,d\overline{\mu}= \int_{X_{(A,i)}}\Phi^*(f)\,d\lambda
\end{align}
where $\Phi^*(f)[(x_i)_{i\in\mathbb{Z}}]=f[\Phi^{-1}((x_i)_{i\in\mathbb{Z}})].$
In other words, the measure $\lambda$ is a Markov measure and two dynamical systems $(\Gamma_f\backslash \mathcal{GT},\phi,\overline{\mu})\textrm{ \ and \ }(X_{(A,i)},\sigma,\lambda)$ are isomorphic (\cite{bm96}).

Let us briefly recall positively recurrent Markov chain following \cite{MT}. Let $Z_n$ be a Markov chain with phase space $\mathcal{S}=\{s_1,s_2,\cdots\}$ and transition probabilities $$p_{ij}=p_{s_is_j}=P\{Z_{n+1}=s_j\,|\,Z_n=s_i\}, \qquad \sum_j p_{ij} =1.$$ For a subset $B\subset \mathcal{S}$ of alphabets, let 
\begin{equation}
\begin{aligned}\label{4}
f_{ij}^{(n)} &=P\{Z_1\ne  s_j,\cdots, Z_{n-1}\ne s_j, Z_n=s_j\,|Z_0=s_i\}, \\
p_{ij}^{(n)} &=P\{ Z_n=s_j\,|Z_0=s_i\}.
\end{aligned}
\end{equation}
and set $f_{ij}^{(0)}=0$ and $p_{ij}^{(0)}=\delta_{ij}$. Observe the following convolution relation \begin{align}\label{conv} p_{s_is_j}^{(n)}=\sum_{r=1}^{n}f_{s_is_i}^{(r)}p_{s_is_j}^{(n-r)}.
\end{align}
Suppose that the Markov chain $Z_n$ is irreducible, i.e., for any $s_i,s_j\in\mathcal{S}$, there exists $n>0$ such that $p_{ij}^{(n)}>0$. We say $\pi_j$ is a \emph{stationary} distribution if it satisfies $\pi_j=\sum_{i\in\mathcal{S}}\pi_i p_{ij}$. A Markov chain $(\mathcal{S},p_{ij})$ is \emph{recurrent} if a stationary distribution exists and furthermore it is called \emph{positive recurrent} if $\sum_{n=1}^{\infty}nf_{jj}^{(n)}<\infty$. When $(\mathcal{S},p_{ij},\pi_j)$ is positive recurrent, $\pi_j$ is unique and we have $$\pi_j=\frac{1}{\sum_{n=1}^{\infty}nf_{jj}^{(n)}}.$$
 An irreducible Markov chain is called \emph{aperiodic} if for some (and hence every) state $s_i\in \mathcal{S}$, its period $\textrm{gcd}\{n\colon p_{ii}^{(n)}>0\}$ is 1.

The Markov chain we consider in this article is (see \cite{k18}) $(\mathcal{S},p_{ij})$ for \begin{equation}\label{MCdef}\mathcal{S}=E(\Gamma\backslash\!\backslash\mathcal{T}),\quad p_{ij}=p_{e_ie_j}=\frac{\lambda([e_i,e_j])}{\lambda([e_i])}.\end{equation} Note that the stationary distribution is $ \pi_j=\lambda([e_j])$
which is positive recurrent and aperiodic. If a positive recurrent Markov chain $Z_n$ is aperiodic, then $\pi_j=\underset{n\to\infty}{\lim}p_{ij}^{(n)}$ and $\pi_j$ does not depend on the choice of $i\in\mathcal{S}$ (Chapter 8-10, \cite{MT}). We will use this fact in Section~\ref{sec:4}.

\section{Extreme value distribution for rays of Nagao type}\label{sec:2}
Let $\mathcal{T}$ be the $(q+1)$-regular tree and let $G=Aut(\cT)$. Denote by $V\mathcal{T}$ the set of vertices of $\mathcal{T}$. Let $\mathcal{X}$ be the edge-indexed graph described in Figure~\ref{fig:2} and $\Gamma$ be the fundamental group of a finite grouping of $\cX$. In other words, $\Gamma$ is a discrete subgroup of $G$ for which the edge-indexed graph associated to the quotient graph of groups $\Gamma\backslash\!\backslash\mathcal{T}$ is $\cX$. Let us denote the vertices of $\mathcal{X}$ by $v_0, v_1, v_2, \dots$ as in Figure~\ref{fig:2}. 
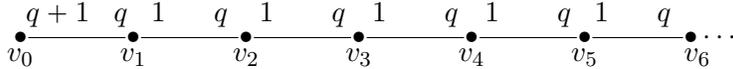
\begin{figure}[H]
\begin{center}
\begin{tikzpicture}[every loop/.style={}]
  \tikzstyle{every node}=[inner sep=0pt]
  \node (0) {$\bullet$} node [below=4pt] at (0,0) {$v_0$};
  \node (2) at (1.5,0) {$\bullet$} node [below=4pt] at (1.5,0) {$v_1$}; 
  \node (4) at (3,0) {$\bullet$}node [below=4pt] at (3,0) {$v_2$}; 
  \node (6) at (4.5,0) {$\bullet$}node [below=4pt] at (4.5,0) {$v_3$}; 
  \node (8) at (6,0) {$\bullet$}node [below=4pt] at (6,0) {$v_4$}; 
  \node (10) at (7.5,0) {$\bullet$}node [below=4pt] at (7.5,0) {$v_5$}; 
  \node (11) at (9.2,0) {$\bullet \cdots$}node [below=4pt] at (9,0) {$v_6$}; 

  \path[-] (0) edge node [above=4pt] {$q+1$ \,  $q$} (2)
 (2) edge node [above=4pt] {$1$ \quad \,\,$q$} (4)
(4) edge node [above=4pt] {$1$ \quad \,\,\,\,$q$} (6)
 (6) edge node [above=4pt] {$1$ \quad \,\,\,\,$q$} (8)
 (8) edge node [above=4pt] {$1$ \quad \,\,\,\,$q$} (10)
 (10) edge node [above=4pt] {$1$ \quad \,\,$q$ \ } (11);
\end{tikzpicture}
\label{fig:2}
\vspace{0.5em}
\caption{A ray of Nagao type}
\end{center}
\end{figure}
The main motivating example is the modular ray: let $\mathbf{K}=\mathbb{F}_q(\!(t^{-1})\!)$ and $\mathbf{Z}=\mathbb{F}_q[t]$. Let $G=PGL(2,\mathbf{K})$ and $\Gamma=PGL(2,\mathbf{Z})$. The group $G$ acts transitively on the $(q+1)$-regular tree $\cT$ which is called the Bruhat-Tits tree associated to $G$ \cite{Se}. 
Let us fix a vertex $x \in V\cT$.

Recall that $\mathcal{GT}, \mathcal{GT}^+, \mathcal{GT}^+_x$ are the space of biinfinite geodesics, geodesic rays and geodesic rays starting at $x$, respectively. 

Let $\pi\colon\mathcal{T}\to\Gamma\backslash\mathcal{T}$ be the natural projection. It induces the natural projection map $\mathcal{GT}\to\Gamma\backslash\mathcal{GT}$ which we will also denote by $\pi$.
\begin{de}\label{excursion} Let us denote $ \pi^{-1}v_0 \cap \{ l_n : n \in \mathbb{Z}_{\ge 0} \} = \{l_{t_1}, l_{t_2}, \cdots \}.$ For a fixed geodesic $l$, such $t_i$ will be denoted by $t_i(l)$.
\begin{enumerate} 
\item The sequence of vertices $l_{t_{n}}l_{t_{n}+1} \cdots l_{t_{n+1}}$ is called the \emph{$n$-th excursion of $l$}. For given $l$, such $t_i$ will be denoted by $t_i{(l)}$.
\item Since the quotient graph of groups has indices alternating between $q$ and $1$ as in Figure~\ref{fig:2}, all the neighbors of $\pi^{-1}(v_0)$ are mapped to $v_1$ under $\pi$. As for $i \geq 1$, all but one neighbors of $\pi^{-1}(v_i)$ are mapped to $v_{i-1}$ under $\pi$ and the remaining one is mapped to  $v_{i+1}$.  As any geodesic has no back-tracking, any $n$-th excursion of geodesic
$l_{t_{n-1} } \cdots l_{t_n}$ projects to $v_0 \cdots v_m v_{m-1} \cdots v_1 v_0$.
Call such $m$ the \emph{height} of $n$-th excursion of $l$ and denote it by $a_n{(l)}$.
The \textit{$n$-th excursion time} is $$ t_{i+1}{(l)} - t_i{(l)} =2 a_i{(l)}.$$
\end{enumerate}
\end{de}
\begin{de}
Let $\mu_x$ be the probability measure on $\mathcal{GT}^+_x$ defined as follows. The subsets $E_y=\{l\in \mathcal{GT}_x^+\colon l\textrm{ passes through }y\}$ for $y \in\mathcal{T}$  form a basis for a topology on $\mathcal{GT}_x^+$. 
Let $\mathcal B$ be the associated Borel $\sigma$-algebra.

The probability measure $\mu_x$ is given by $$\mu_x(E_y)=\frac{1}{(q+1)q^{d(x,y)-1}}.$$ 
The measure $\mu_x$ is invariant under every element of $\textrm{Aut}(\mathcal{T})$ which fixes $x$.
\end{de}

\begin{prop}[Independence of excursions]\label{lem:1} Let $x$ be a vertex of $\mathcal{T}$ which is a lift of $v_0$.

For any $k\ge 1$ and for any $1\le i_1<\cdots<i_k$, $$\mu_x\left(\left\{l\in\mathcal{GT}_x^+|\underset{1\le j\le k}{\max}a_{i_j}{(l)}\le N\right\}\right)=\left(1-\frac{1}{q^{N}}\right)^k.$$ 

\end{prop}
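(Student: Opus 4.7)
The plan is to identify $\mu_x$ with the law of the uniform non-backtracking random walk starting at $x$ on the $(q+1)$-regular tree $\mathcal T$. The cylinder formula $\mu_x(E_y) = 1/((q+1)q^{d(x,y)-1})$ is equivalent to saying that the first step is uniform among the $q+1$ neighbors of $x$ and every subsequent step is uniform among the $q$ non-backtracking neighbors of the current vertex. Because the local structure of $\mathcal T$ at every lift of $v_0$ is identical, this Markov description will force successive excursions to be independent and identically distributed.

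First I would compute the distribution of a single excursion height. Since at every lift of $v_j$ (for $j\ge1$) the edge-indexing of Figure~\ref{fig:2} forces exactly one of the $q+1$ neighbors to project to $v_{j+1}$ and the other $q$ to project to $v_{j-1}$, a non-backtracking walker at such a lift that has just arrived from a lift of $v_{j-1}$ has $q$ admissible continuations, exactly one of which moves ``forward'' to $v_{j+1}$. The event $\{a_i \ge m\}$ is equivalent to the walker choosing ``forward'' at each of the vertices projecting to $v_1,v_2,\dots,v_{m-1}$ during the $i$-th excursion, so
\[
\mu_x(\{a_i\ge m\}) = q^{-(m-1)},\qquad \mu_x(\{a_i\le N\}) = 1 - q^{-N}.
\]

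Next I would establish the joint product formula via the Markov property. Let $\mathcal F_n$ denote the sub-$\sigma$-algebra of $\mathcal B$ generated by the finite segment $l_0,l_1,\dots,l_{t_n(l)}$. Conditional on $\mathcal F_{i-1}$, the continuation $(l_t)_{t\ge t_{i-1}(l)}$ is again a uniform non-backtracking walk, now starting from the lift $l_{t_{i-1}(l)}$ of $v_0$; since the preceding height computation depends only on the fact that one starts at some lift of $v_0$ and not which one, we obtain $\mu_x(\{a_i\le N\}\mid \mathcal F_{i-1}) = 1 - q^{-N}$ almost surely. Iterating over $i = i_k,i_{k-1},\dots,i_1$ via the tower property yields
\[
\mu_x\!\left(\bigcap_{j=1}^k \{a_{i_j}\le N\}\right) = (1 - q^{-N})^k,
\]
as claimed.

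The only genuinely nontrivial ingredient is the Markov property for $\mu_x$, which I expect to be the main point to verify carefully. It follows directly from the cylinder formula: for any admissible segment $x = y_0,y_1,\dots,y_n$ in $\mathcal T$, the induced conditional measure on continuations is the uniform non-backtracking walk from $y_n$, depending on the past only through $y_{n-1}$ and $y_n$, since $\mu_x(E_{y_n}) = \mu_x(E_{y_{n-1}})\cdot 1/q$ for each admissible one-step extension. Once this is in hand, the proof reduces to the routine conditional-probability manipulation described above.
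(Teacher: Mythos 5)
Your proof is correct and is essentially the paper's argument in different clothing: the paper also computes the conditional probability that a single excursion exceeds height $N$ given its starting vertex (via ratios of the cylinder measures $\mu_x(E_y)$, yielding $q^{-N}$ independently of the starting data) and then iterates by induction on $k$, which is exactly your non-backtracking-walk Markov property plus the tower property. The only caveat is a harmless indexing slip ($\mathcal F_{i-1}$ should be the $\sigma$-algebra up to the start $t_i$ of the $i$-th excursion), which does not affect the argument.
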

\begin{proof} We prove by induction. Let us denote 
$$A_{i, N}  =\{l\in\mathcal{GT}_x ^+ \,|\,\underset{t_i \le t\le t_{i+1}}{\max} d(v_0,\pi(l_{t}))\le N\}$$ and $A_{i,N}^c$ its complement in $\mathcal{GT}_x^+$ so that
$$\left\{l\in\mathcal{GT}_x^+|\underset{1\le j\le k}{\max}a_{i_j}{(l)}\le N\right\} = \bigcap_{j=1}^k A_{i_j, N}.$$

We first consider the case $k=1$ by computing $\mu_x(A_{i_1, N}^c).$
Let $$V_{i_1}=\{ y \in \pi^{-1}(v_0) : |[xy] \cap v_0 \Gamma|=i_1 \}$$ be the set of starting vertices of $i_1$-th excursions of geodesics.
The geodesic rays with $a_{i_1}^{(l)} > N$ have $l_{t_{i_1}} \in V_{i_1}$ and the $i_1$-th excursion projects to a ray on $\cX$ starting with $v_0v_1 \cdots v_N v_{N+1}$.

The following observation is the keypoint: for each $y \in V_{i_1}$, there exist $q+1$ lifts of $v_1$ which are neighbors of $y$. However, one of them is visited by the geodesic just before it arrives at $y$. Thus, there are exactly $q$ lifts of $v_0v_1$ starting from $y$ not backtracking the geodesic $l_{t_{{i_1}-1}}l_{t_{i_1}}.$ For each of these lifts, there is a unique lift of $v_0 \cdots v_{N+1}$ starting with the lift. Call the endpoints of these lifts $z_j, j=1, \cdots, q$.
It follows that
\begin{align}\label{eq:2.1}
\begin{split}
\frac{\mu_x(\{ l : l_{t_{i_1}} =y, \pi(l_{t_{i_1}} \cdots l_{t_{i_1}+N+1}) = v_0 \cdots v_{N+1} \})}{\mu_x(\{ l : l_{t_{i_1}} =y \})}\\
=\frac{\sum_{i=1}^q \mu_x(E_{z_i})}{\mu_x(E_y)}=\frac{q\cdot \frac{1}{(q+1)q^{N+1+d(x,y)-1}}}{\frac{1}{(q+1)q^{d(x,y)-1}}} = \frac{1}{q^N}.
\end{split}
\end{align}

By definition, $l_{t_{i_1}} \in V_{i_1}$ for any $l$. Thus, summing over $y \in V_{i_1}$, we have
$$ \mu_x (A_{i_1, N}^c) = \sum_{ y \in V_{i_1}} \mu_x(\{ l : l_{t_{i_1} }= y\} \cap A_{i_1, N}^c)= \sum \mu_x\{ l: l_{t_{i_1}} = y \} \frac{1}{q^N} =\frac{1}{q^N}  .$$

Now suppose the proposition holds up to $k-1$.
Replacing $y \in V_{i_1}$ by $z \in V_{i_k}$ in the equations \eqref{eq:2.1},
the equation
\begin{equation}\label{eq:2.3}
\frac{\mu_x(\{ l : l_{t_{i_k}} =z \}\cap \bigcap_{j=1}^{k-1} A_{i_j, N}\cap A_{i_k,N})}{\mu_x(\{ l :  l_{t_{i_k}} =z\} \cap\bigcap_{j=1}^{k-1} A_{i_j, N})}=1-\frac{1}{q^N}\end{equation}
holds if and only if both the numerator and the denominator of the left hand side are not zero. Equivalently, $[xz]$ is the beginning of a geodesic in $\bigcap_{j=1}^{k-1} A_{i_j,N}$, i.e. $[xz]$ does not project to a ray starting with $v_0 \cdots v_{N+1}$ on the $\cdots i_{k-1}$-th excursions. 

By induction hypothesis, it follows that
\begin{align*}&\mu_x\left(\left\{l\in\mathcal{GT}_x^+|\underset{1\le j\le k}{\max}a_{i_j}^{(l)} \le N\right\}\right) 
=\mu_x( \bigcap_{j=1}^{k-1} A_{i_j, N}\cap A_{i_k,N}))\\
&= \sum_{z \in V_{i_k}} \mu_x ( \{ l : l_{t_{i_k}} =z \} \cap \bigcap_{j=1}^{k-1} A_{i_j, N}\cap A_{i_k,N})\\
&= (1-\frac{1}{q^N}) \sum_{z \in V_{i_k}} \mu_x (\{ l : l_{t_{i_k}} = z \} \cap \bigcap_{j=1}^{k-1} A_{i_j, N})
=\left(1-\frac{1}{q^N}\right) \mu_x( \bigcap_{j=1}^{k-1} A_{i_j, N})\\
&=\left(1-\frac{1}{q^N}\right) ^k.
\end{align*}
This completes the proof of the proposition.
\end{proof}
Although the proof is lengthy, the main idea of the proof above is that each excursion is independent. We will use this fact again in Section~\ref{sec:4} for more general discrete subgroups.

\begin{prop} For any $x$, 
 $$\lim_{N \to \infty} \mu_x\left(\left\{l\in\mathcal{GT}_x^+|\underset{1\le j\le q^N}{\max}a_{j}{(l)}\le N+y\right\}\right)=e^{-q^y}.$$ 
\end{prop}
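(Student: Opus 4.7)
The plan is to deduce the statement directly from the previous proposition (Proposition~\ref{lem:1}) combined with a standard exponential limit. Specifically, I would apply that proposition with $k = q^N$, consecutive indices $i_j = j$ for $1 \le j \le q^N$, and with the height threshold $N$ replaced by $N + y$. The proposition is valid for any $k \ge 1$, any strictly increasing sequence of indices, and any positive height, so this substitution is legitimate and yields the closed-form identity
$$\mu_x\!\left(\!\left\{l \in \mathcal{GT}_x^+ \colon \max_{1 \le j \le q^N} a_j(l) \le N + y\right\}\!\right) = \left(1 - \frac{1}{q^{N+y}}\right)^{q^N}.$$

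To pass to the limit as $N \to \infty$, I would rewrite the base as $1 - q^{-y}/q^N$, so that the right-hand side takes the shape $(1 - c/m)^m$ with $m = q^N \to \infty$ and $c = q^{-y}$ independent of $N$. Invoking the standard elementary limit $(1 - c/m)^m \to e^{-c}$ as $m \to \infty$ then yields the closed-form exponential asserted in the statement, completing the evaluation of the limit.

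The argument is therefore essentially immediate from the previous proposition: all of the probabilistic content (the independence-of-excursions identity and the exact value of each excursion event's measure) is already packaged into Proposition~\ref{lem:1}, and this corollary is obtained by specializing the index count to $k = q^N$ and the height threshold to $N + y$, followed by a classical calculus limit. I do not foresee any substantive obstacle; the only care required is bookkeeping to match the parameters of Proposition~\ref{lem:1} with those of the statement, and the verification that the quantity playing the role of $c$ stays bounded and independent of $N$ so that the elementary limit applies in the uniform sense needed to pass from a sequence of finite products to the exponential limit.
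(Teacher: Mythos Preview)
Your proposal is correct and matches the paper's proof essentially verbatim: both apply Proposition~\ref{lem:1} with $k=q^N$ and threshold $N+y$ to obtain $\left(1-q^{-(N+y)}\right)^{q^N}$, then invoke the elementary limit $(1-c/m)^m\to e^{-c}$ with $c=q^{-y}$ and $m=q^N$. Note that this computation (yours and the paper's alike) actually yields $e^{-q^{-y}}=e^{-1/q^y}$, consistent with Corollary~1.2 and Theorem~\ref{main}; the $e^{-q^y}$ in the displayed statement is evidently a sign typo in the exponent.
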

\begin{proof} By Lemma~\ref{lem:1}, we have $$\mu_x\left(\left\{l\in\mathcal{GT}_x^+|\underset{1\le j\le n}{\max}a_{j}{(l)}\le N+y\right\}\right)=\left(1-\frac{1}{q^{N+y}}\right)^n.$$ Letting $n=q^N$, and $N \to \infty$, we obtain the proposition.
\end{proof}

We now prove a similar result for bi-infinite geodesics. Note that  \begin{align*}
&\lim_{N\to\infty}(\mu_x\times\mu_x)\left(\left\{l\in\mathcal{GT}_x|\underset{1\le j\le q^N}{\max}a_{j}{(l)}\le N+y\right\}\right)\\
&=\,\lim_{N\to\infty}\mu_x\left(\left\{l\in\mathcal{GT}_x^+|\underset{1\le j\le q^N}{\max}a_{j}{(l)}\le N+y\right\}\right)=e^{-q^y}.
\end{align*}
\begin{prop}\label{prop:2.6}
$$\lim_{N\to\infty}\mu\left(\left\{l\in\mathcal{GT}|\underset{1\le j\le q^N}{\max}a_{j}{(l)}\le N+y\right\}\right)=e^{-q^y}.$$
\end{prop}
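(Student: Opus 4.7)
The plan is to decompose $\mu(E_N)$ via Definition~\ref{def:BM}. Choosing a lift $x_i\in V\mathcal{T}$ of $v_i$ for each $i\ge 0$, we have
$$\mu(E_N)=C_0\sum_{i\ge 0}\frac{1}{|\Gamma_{x_i}|}(\mu_{x_i}\times\mu_{x_i})\bigl(\pi^{-1}E_N\cap\mathcal{GT}_{x_i}\bigr),$$
where $E_N=\{[l]:\max_{1\le j\le q^N}a_j(l)\le N+y\}$. Because the event depends only on the forward half of $l$, each term collapses to the one-sided measure $\mu_{x_i}\bigl(\{l^+\in\mathcal{GT}_{x_i}^+:\max_{1\le j\le q^N}a_j(l^+)\le N+y\}\bigr)$. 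I would then show every such one-sided measure equals $(1-q^{-(N+y)})^{q^N}$, and pass to the limit inside the sum. Since $\sum_{i\ge 0}1/|\Gamma_{x_i}|=C_0^{-1}<\infty$, dominated convergence yields
$$\lim_{N\to\infty}\mu(E_N)=C_0\sum_{i\ge 0}\frac{1}{|\Gamma_{x_i}|}\cdot e^{-q^y}=e^{-q^y}.$$

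For $i=0$, the one-sided identity is exactly Proposition~\ref{lem:1}, whose two-sided counterpart is the displayed remark immediately preceding the proposition. For $i\ge 1$, I would reduce to the $i=0$ case by conditioning on the first forward return time $t_1$ of $l$ to $\pi^{-1}(v_0)$. This return is $\mu_{x_i}$-almost surely finite, because the only obstruction would be that the forward endpoint $l^+$ is a parabolic fixed point of $\Gamma$, and the (countable) set of such points is $\mu_{x_i}$-null as $\mu_{x_i}$ is non-atomic on $\partial_\infty\mathcal{T}$. Given the arrival vertex $y=l_{t_1}$ with predecessor $z=l_{t_1-1}$, the conditional distribution of $l_{t_1+1}$ is uniform over the $q$ neighbors of $y$ other than $z$, all of which are lifts of $v_1$; this is exactly the transition used in the proof of Proposition~\ref{lem:1}. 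Iterating the non-backtracking argument shows that the post-$t_1$ excursion heights $a_1,a_2,\ldots$ are i.i.d.\ geometric under $\mu_{x_i}$ with the same distribution as in the $i=0$ case, which immediately gives the product formula.

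The main obstacle is precisely this extension to $i\ge 1$: although intuitively the structure at $v_0$-lifts is memoryless and independent of the base point, the argument requires carefully unwinding the non-backtracking constraint through the first return and verifying the almost-sure finiteness of $t_1$. Once these are established, the sum decomposition above yields the proposition at once.
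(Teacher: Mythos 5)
Your argument is correct, but it follows a genuinely different route from the paper's. The paper establishes the excursion statistics only at a single base vertex $x$ lifting $v_0$ (Proposition~\ref{lem:1} and the displayed remark before the proposition), and then handles the other vertex classes $\mathcal{GT}_{x_i}$, $i\neq 0$, dynamically: it decomposes each such class according to the first return time $t_1=i+2k$ to $\pi^{-1}(v_0)$ and uses the $\phi$-invariance of $\mu$ to shift by $\phi^{i+2k}$, concluding that the contribution of $\mathcal{GT}_{x_i}$ equals that of $\mathcal{GT}_x$; no excursion computation is redone away from $v_0$. You instead work purely locally: starting from the same decomposition of $\mu$ in Definition~\ref{def:BM}, you re-prove the exact product formula $(1-q^{-(N+y)})^{q^N}$ under each visual measure $\mu_{x_i}$ by conditioning on the ($\mu_{x_i}$-a.s.\ finite) first hitting time of $\pi^{-1}(v_0)$ and rerunning the non-backtracking/independence computation of Proposition~\ref{lem:1}, then sum over vertex classes (since all terms coincide, dominated convergence is not even needed: $C_0\sum_i|\Gamma_{x_i}|^{-1}=1$). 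The paper's flow-invariance trick buys economy — it needs the excursion analysis only once and never discusses hitting times at general base points — while your route buys independence from $\phi$-invariance, exact rather than merely limiting values for each term, and an explicit treatment of the measure-zero set of geodesics escaping into the cusp without ever reaching $\pi^{-1}(v_0)$, a point the paper's decomposition over finite $t_1$ leaves implicit. Two small caveats, both at the same level of rigor as the paper itself: the collapse of $(\mu_{x_i}\times\mu_{x_i})$ to the one-sided measure for events depending only on $l^+$ deserves the one-line verification that the backward mass $\mu_{x_i}(\{\omega^-:\text{the geodesic }(\omega^-,\omega^+)\text{ passes through }x_i\})=\tfrac{q}{q+1}$ is independent of $\omega^+$ (this is where regularity of the tree enters), and "iterating the non-backtracking argument" should be fleshed out exactly as in the induction of Proposition~\ref{lem:1}, whose distance-ratio computation indeed uses nothing about the base point lying over $v_0$.
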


\begin{proof}
Choose a lift $x_i$ in $V\mathcal{T}$ of $v_i$. Recall from Definition~\ref{excursion} that $t_1$ is the smallest non-negative integer satisfying $\pi(l_{t_1})=v_0.$
For $i\ne 0$, we have
\begin{align*} &C_0 (\mu_{x_i}\times\mu_{x_i})  \left(\left\{l\in\mathcal{GT}_{x_i}|\underset{1\le j\le q^N}{\max}a_{j}{(l)}\le N+y\right\}\right)\\\displaybreak[0]
=&C_0 \sum_{k=0}^{\infty}(\mu_{x_i}\times\mu_{x_i})\left(\left\{l\in\mathcal{GT}_{x_i}|\underset{1\le j\le q^N}{\max}a_{j}{(l)}\le N+y,t_1=i+2k\right\}\right)\\\displaybreak[0]
=&\sum_{k=0}^{\infty}\mu\left(\left\{[l]\in\Gamma\backslash\mathcal{GT}|\underset{1\le j\le q^N}{\max}a_{j}{(l)}\le N+y,t_1=i+2k\right\}\cap\pi\left(\mathcal{GT}_{x_i}\right)\right)\\\displaybreak[0]
=&\sum_{k=0}^{\infty}\mu\left(\phi^{i+2k}\left[\left\{[l]\in\Gamma\backslash\mathcal{GT}|\underset{1\le j\le q^N}{\max}a_{j}{(l)}\le N+y,t_1=i+2k\right\}\cap\pi\left(\mathcal{GT}_{x_i}\right)\right]\right)\\\displaybreak[0]
=&\mu\left(\left\{[l]\in\Gamma\backslash\mathcal{GT}|\underset{1\le j\le q^N}{\max}a_{j}{(l)}\le N+y\right\}\cap\pi\left(\mathcal{GT}_{x}\right) \cup_{k=0}^{\infty} \{ l \in \Gamma \backslash \mathcal{GT} : t_1 = i + 2k\}\right)\\\displaybreak[0]
=&C_0 (\mu_{x}\times\mu_{x})\left(\left\{l\in\mathcal{GT}_{x}|\underset{1\le j\le q^N}{\max}a_{j}{(l)}\le N+y\right\}\right).
\end{align*} 
The $\phi$-invariance of $\mu$ gives the third equality.
\end{proof}

Using Proposition~\ref{prop:2.6}, we prove the main theorem for the rays of Nagao type. Recall that $h_T^{(l)}=\underset{0\le t\le T}{\max}d(\pi(l_t),v_0)$ and that $t_n$ is the starting time of the $n$-th excursion of $l$.

For each geodesic $l$, let $S_n^{(l)}=2(a_1^{(l)}+\cdots +a_n^{(l)})= t_{n+1}-t_1$ be the total time of the first $n$ excursions and $T_n$ its expectation with respect to $\mu$.
Note that 
\begin{align*}T_n &= \mathbb{E}_\mu \left(\sum_{i=1}^n2a_i\right)=2n\mathbb{E}_\mu(a_1)=2n\sum_{k=1}^{\infty}k\mu(a_1=k)
=\sum_{k=1}^{\infty}\frac{2nk(q-1)}{q^k}\\&=\frac{2qn}{q-1}.
\end{align*}
\begin{thm}\label{main}
We have
$$\lim_{T\to\infty}\mu\left(\left\{[l]\in\Gamma\backslash\mathcal{GT} : h_{T}^{(l)}\le \log_q\left(\frac{T(q-1)}{2q}\right)+y\right\}\right)=e^{-1/q^y}.$$
\end{thm}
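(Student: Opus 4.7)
The plan is to connect the time-indexed quantity $h_T^{(l)}$ with the excursion-indexed maximum studied in Proposition~\ref{prop:2.6}, via a law of large numbers for the number of excursions completed by time $T$. Set $N = N(T) := \log_q\bigl(T(q-1)/(2q)\bigr)$, so that $q^{N(T)} = T(q-1)/(2q)$ is precisely the value of $n$ for which $T_n = T$ in the calculation preceding the theorem. Put $M := N(T) + y$.

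\textbf{Step 1 (LLN for the excursion count).} Let $n(T,l) := \#\{ j \ge 1 : t_{j+1}(l) \le T \}$ be the number of complete excursions that fit in $[t_1(l), T]$. Because $\mathcal{X}$ is a ray of Nagao type, Proposition~\ref{lem:1} together with the conformal transfer between $\mu_x$ and $\mu$ used in the proof of Proposition~\ref{prop:2.6} implies that $\{a_j^{(l)}\}_{j \ge 1}$ is an i.i.d.\ sequence under $\mu$ with $\mu(a_1^{(l)} = k) = (q-1)/q^k$ and mean $\mathbb{E}_\mu(a_1^{(l)}) = q/(q-1)$. The strong law of large numbers applied to $S_n^{(l)} = 2\sum_{j=1}^n a_j^{(l)}$ gives $S_n^{(l)}/n \to 2q/(q-1)$ almost surely, whence by inversion $n(T,l)/T \to (q-1)/(2q)$ $\mu$-a.s. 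Fixing $\varepsilon > 0$ and setting $n_\pm(T) := \lfloor (1 \pm \varepsilon)\, q^{N(T)} \rfloor$ therefore yields
$$\mu\bigl(\{\, l : n_-(T) \le n(T,l) \le n_+(T) \,\}\bigr) \xrightarrow[T\to\infty]{} 1.$$

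\textbf{Step 2 (Sandwich).} Write $a_0^{(l)}$ for the height of the (partially visible) excursion containing time $0$. On the event $\{n_-(T) \le n(T,l) \le n_+(T)\}$, the distances $d(v_0, \pi(l_t))$ for $t \in [0,T]$ come from the initial partial excursion ($\le a_0^{(l)}$), the complete excursions $a_1^{(l)}, \ldots, a_{n(T,l)}^{(l)}$, and the ongoing partial excursion ($\le a_{n(T,l)+1}^{(l)}$). This yields the sandwich
$$\{\, \max_{0 \le j \le n_+(T)+1} a_j^{(l)} \le M \,\} \subseteq \{\, h_T^{(l)} \le M \,\} \subseteq \{\, \max_{1 \le j \le n_-(T)} a_j^{(l)} \le M \,\},$$
valid on the high-probability event of Step~1.

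\textbf{Step 3 (Asymptotics and conclusion).} The argument of Proposition~\ref{prop:2.6}, applied verbatim with an arbitrary positive integer $n$ in place of $q^N$, yields
$$\mu\bigl(\{\, \max_{1 \le j \le n} a_j^{(l)} \le M \,\}\bigr) = (1 - q^{-M})^n.$$
Substituting $n = n_\pm(T)$ and $M = N(T) + y$, and observing that additive shifts by one excursion index are negligible on the exponentially large scale $q^{N(T)}$,
$$(1 - q^{-(N(T)+y)})^{(1 \pm \varepsilon)\, q^{N(T)}} \xrightarrow[T \to \infty]{} e^{-(1 \pm \varepsilon)/q^y}.$$
Combining Steps~1--3 gives
$$e^{-(1+\varepsilon)/q^y} \le \liminf_{T \to \infty} \mu\bigl(h_T^{(l)} \le N(T)+y\bigr) \le \limsup_{T \to \infty} \mu\bigl(h_T^{(l)} \le N(T)+y\bigr) \le e^{-(1-\varepsilon)/q^y},$$
and letting $\varepsilon \to 0^+$ completes the proof.

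The main obstacle is Step~1: justifying the SLLN for the $a_j^{(l)}$ under the Bowen--Margulis measure $\mu$ rather than under the rooted boundary measures $\mu_x$ on $\mathcal{GT}_x^+$ supplied by Proposition~\ref{lem:1}. The required transfer is essentially the chain of equalities established in the proof of Proposition~\ref{prop:2.6}, together with $\phi$-invariance of $\mu$. A secondary concern is the boundary effect: the initial and final partial excursions contribute at most two additional heights to the bound on $h_T^{(l)}$, but these are harmless in the sandwich since $q^{N(T)} \to \infty$.
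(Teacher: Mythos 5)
Your argument is correct and follows the same overall strategy as the paper's proof: use a law of large numbers to convert the time window $[0,T]$ into a window of excursion indices, then feed the exact product formula $(1-q^{-M})^n$ from Proposition~\ref{lem:1} (transferred to $\mu$ as in Proposition~\ref{prop:2.6}) into a sandwich. The genuine difference lies in how the fluctuation of $S_n^{(l)}$ about $T_n$ is controlled. The paper invokes the central limit theorem to work on a set $B_{n,C}$ where $|S_n^{(l)}-T_n|\le C\sqrt{n}$, and then must separately estimate the measure of geodesics exceeding height $N+y$ during a time interval of length $2Cq^{N/2}$ (the step involving $q^{2N/3}$). You instead use the crude multiplicative window $n_\pm(T)=\lfloor(1\pm\varepsilon)q^{N(T)}\rfloor$ coming from the strong law alone, accept the mismatched limits $e^{-(1\pm\varepsilon)/q^y}$, and recover the constant by letting $\varepsilon\to 0$ at the end. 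This buys you a proof that avoids the CLT entirely and sidesteps the paper's auxiliary estimate on short time windows; the cost is only that you need the $\varepsilon$-squeeze at the end, which is harmless since the limit function is continuous in the exponent. Your treatment of the boundary terms is also more explicit than the paper's: you correctly isolate the initial partial excursion $a_0^{(l)}$, whose law under $\mu$ differs from that of the $a_j^{(l)}$, $j\ge 1$. One small correction there: the reason $a_0^{(l)}$ is negligible is that it is an almost surely finite random variable while the threshold $M=N(T)+y\to\infty$, so $\mu(a_0^{(l)}>M)\to 0$; it is not because $q^{N(T)}\to\infty$, which is the relevant fact only for absorbing the shift of the index range by one or two complete excursions. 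With that phrasing fixed, the proof is complete and, to my reading, tighter than the one in the paper.
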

\begin{proof}


By the law of large numbers, we have $\frac{S_n^{(l)}-T_n}{n}\to 0$ for $\mu$-almost every $l\in\mathcal{GT}$. Moreover, if we denote by $B_{n,C}$ the set $\{l\in\mathcal{GT}\colon |S_n^{(l)}-T_n|\le C\sqrt{n}\}$, then by the central limit theorem of the shift map, for any $\epsilon>0$, there exists $C>0$ such that $$\mu_x(B_{n,C})>1-\epsilon$$ holds for all $n\ge 1$.

Let $A_{T,N+y}=\{l\in\mathcal{GT}\,|\,\underset{1\le t\le T}{\max} d(v_0,\pi(l_t))\le N+y\}$. Note that $$A_{T_{q^N}+Cq^{N/2},N+y}\subseteq A_{T_{q^N},N+y}\subseteq A_{T_{q^N}-Cq^{N/2},N+y}.$$
Therefore,
$$\mu(A_{T_{q^N}+Cq^{N/2},N+y}\cap B_{n,C})\le\mu( A_{S_{q^N}^{(l)},N+y}\cap B_{n,C})\le\mu(A_{T_{q^N}-Cq^{N/2},N+y}\cap B_{n,C}).$$
Meanwhile, 
\begin{align*}
&\mu(A_{T_{q^N}- Cq^{N/2},N+y})-\mu(A_{T_{q^N}+Cq^{N/2},N+y})\\
&=\mu\left(\left\{l\in\mathcal{GT}\colon\underset{T_{q^N}-Cq^{N/2}\le t\le T_{q^N}+Cq^{N/2}}{\max}d(v_0,\pi(l_t))>N+y\right\}\right)\\
&=\mu\left(\left\{l\in\mathcal{GT}\colon\underset{0\le t\le 2Cq^{N/2}}{\max} d(v_0,\pi(l_t))> N+y\right\}\right) \, (\mu\textrm{ is }\phi\textrm{-invariant})\\ \displaybreak[0]
&\le\mu\left(\left\{l\in\mathcal{GT}\colon\underset{0\le t\le \frac{2q^{(2N/3)+1}}{q-1}-Cq^{N/3}}{\max} d(v_0,\pi(l_t))> N+y\right\}\right) \\&\qquad\qquad\qquad\left(\frac{2q^{(2N/3)+1}}{q-1}-Cq^{N/3}\ge 2Cq^{N/2}\textrm{ for sufficiently large }N\right)\\
&\le\mu\left(\left\{l\in\mathcal{GT}\cap B_{n,C}\colon\underset{0\le t\le S_{q^{2N/3}}^{(l)}}{\max} d(v_0,\pi(l_t))> N+y\right\}\right)+\epsilon.
\end{align*}

Hence, for any given $\epsilon>0$, there exists $M>0$ such that $$\mu(A_{S_{q^N}^{(l)}},N+y)-2\epsilon\le\mu(A_{T_{q^N}},N+y)\le\mu(A_{S_{q^N}^{(l)}},N+y)+2\epsilon$$ holds for all $N\ge M$. By Proposition~\ref{prop:2.6}, we have
 $$\lim_{N \to \infty} \mu\left(\left\{l\in\mathcal{GT}\colon\underset{0\le t\le \frac{2q^{N+1}}{q-1}}{\max}d(v_0, \pi(l_t))\le N+y\right\}\right)=e^{-q^y}$$ which completes the proof.
\end{proof}

\section{Extreme value distribution for geometrically finite quotient} \label{sec:4}
In this section, we prove extreme value distribution for geometrically finite quotient graphs of regular trees using Markov chain on the compact part and the extreme value distribution result for each ray proved in the previous section.

We remark that an alternative approach might be to use general extreme value theorem \cite{Fr} using the $\phi$-mixing property, i.e., the error term of mixing $|\mu(A\cap T^{-n}B)-\mu(A)\mu(B)|$ is bounded by the measures of the sets $A$ and $B$) of the measure-preserving transformation $T$, which is not available here. Note that exponential mixing is known \cite{BPP} based on a result of Young \cite{Yo} (see also the paper by the first author \cite{k18}). 

Let us first fix some notations on the quotient graph.
Given a $(q+1)$-regular tree $\mathcal{T}$, let $\textrm{Aut}(\mathcal{T})$ be the group of automorphisms of $\mathcal{T}$ and $\Gamma$ be a geometrically finite discrete subgroup of $\textrm{Aut}(\mathcal{T})$ (see Section~\ref{sec:1}). There are finite edge-indexed rays $C_1,\cdots,C_k$  and a finite edge-indexed graph $D$ such that
\begin{enumerate}\setlength\itemsep{-\parsep} 
\item $V(\Gamma\backslash\mathcal{T}_{\min})=VD\cup VC_1\cup\cdots\cup VC_k$.
\item $|VD\cap VC_j|=1$ and $VC_i\cap VC_j=\phi$ if $i\ne j$.
\item Each $C_i$ is a Nagao ray of index $(1,q,1, q,\ldots,)$, i.e., $i(\overline{e_n})=q,i(e_n)=1$ for all $n\ge 0$.
\end{enumerate}
Fix such $C_j$ and $D$. Let us denote by $v_{i,0}$ the unique element of $VD\cap VC_i$ $(1\le i\le k)$. Then, we obtain the following figure.
\
\vspace{0.5em}
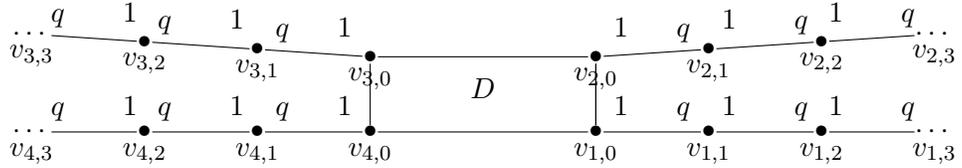
\begin{figure}[H]\label{fig:3}
\begin{center}
\begin{tikzpicture}[every loop/.style={}]
  \tikzstyle{every node}=[inner sep=0pt]
  \node (0) {} node [above=4pt] at (0.5,1.5) {};
  \node (2) at (1.5,0) {$\bullet$} node [below=4pt] at (1.5,0) {$v_{1,0}$}; 
  \node (4) at (3,0) {$\bullet$}node [below=4pt] at (3,0) {$v_{1,1}$}; 
  \node (6) at (4.5,0) {$\bullet$}node [below=4pt] at (4.5,0) {$v_{1,2}$}; 
  \node (8) at (6,0) {$\cdots$}node [below=4pt] at (6,0) {$v_{1,3}$}; 
  
  \path[-] 
 (2) edge node [above=4pt] {$1$ \quad \,\,$q$} (4)
(4) edge node [above=4pt] {$1$ \quad \,\,\,\,$q$} (6)
 (6) edge node [above=4pt] {$1$ \quad \,\,\,\,$q$} (8);
 
  \node (20) at (1.5,1) {$\bullet$} node [below=4pt] at (1.5,1) {$v_{2,0}$}; 
  \node (40) at (3,1.1) {$\bullet$}node [below=4pt] at (3,1.1) {$v_{2,1}$}; 
  \node (60) at (4.5,1.2) {$\bullet$}node [below=4pt] at (4.5,1.2) {$v_{2,2}$}; 
  \node (80) at (6,1.3) {$\cdots$}node [below=4pt] at (6,1.3) {$v_{2,3}$}; 
  
  \path[-] 
  (2) edge node [above=4pt] {  $$} (20)
 (20) edge node [above=4pt] {$1$ \quad \,\,$q$} (40)
(40) edge node [above=4pt] {$1$ \quad \,\,\,\,$q$} (60)
 (60) edge node [above=4pt] {$1$ \quad \,\,\,\,$q$} (80);

  \node (3) at (-1.5,0) {$\bullet$} node [below=4pt] at (-1.5,0) {$v_{4,0}$}; 
  \node (5) at (-3,0) {$\bullet$}node [below=4pt] at (-3,0) {$v_{4,1}$}; 
  \node (7) at (-4.5,0) {$\bullet$}node [below=4pt] at (-4.5,0) {$v_{4,2}$}; 
  \node (9) at (-6,0) {$\cdots$}node [below=4pt] at (-6,0) {$v_{4,3}$}; 

   \path[-] 
(3) edge node [above=4pt] {$q$ \quad \,\,$1$} (5)
(5) edge node [above=4pt] {$q$ \quad \,\,\,\,$1$} (7)
 (7) edge node [above=4pt] {$q$ \quad \,\,\,\,$1$} (9);

  \node (30) at (-1.5,1) {$\bullet$} node [below=4pt] at (-1.5,1) {$v_{3,0}$}; 
  \node (50) at (-3,1.1) {$\bullet$}node [below=4pt] at (-3,1.1) {$v_{3,1}$}; 
  \node (70) at (-4.5,1.2) {$\bullet$}node [below=4pt] at (-4.5,1.2) {$v_{3,2}$}; 
  \node (90) at (-6,1.3) {$\cdots$}node [below=4pt] at (-6,1.3) {$v_{3,3}$}; 

   \path[-] 
  (3) edge node [above=4pt] {$ $} (30)
  (3) edge node [above=4pt] {$ $} (2)
  (20) edge node [above=4pt] {$ $} (2)
  (30) edge node [below=8pt] {$D$} (20) 
 (30) edge node [above=4pt] {$q$ \quad \,\,$1$} (50)
(50) edge node [above=4pt] {$q$ \quad \,\,\,\,$1$} (70)
 (70) edge node [above=4pt] {$q$ \quad \,\,\,\,$1$} (90);
\end{tikzpicture}
\caption{The quotient graph of a geometrically finite subgroup}
\end{center}
\end{figure}

\begin{de} Let $l \in \mathcal{GT}$. 
We can write $$\pi^{-1}(\{v_{1,0},\ldots,v_{k,0}\})=\{\ldots,l_{t_{-1}},l_{t_0},l_{t_1},l_{t_2},\ldots,\}$$ with $t_1$ be the smallest positive time when $l$ leaves the compact part. Note that $\pi(l_{t_{2n-1}})=\pi(l_{t_{2n}})$. The sequence of vertices $l_{t_{2n-1}}l_{t_{2n-1}+1}\cdots l_{t_{2n}}$ is called the $n$\emph{-th excursion of }$l$.
\end{de}
Comparing with Definition~\ref{excursion}, note that the starting time of the $n$-th excursion is now $t_{2n-1}$.
As explained in Definition~\ref{excursion} (2), any $n$-th excursion of geodesic projects to $v_{i,0}\cdots v_{i,m}v_{i,m-1}\cdots v_{i,1}v_{i,0}$ for some $i$ and $m$. We call such $m$ the \emph{height} of $n$-th excursion of $l$ and denote it by $a_n{(l)}$.

Recall that 
$ h_T^{(l)} = \max_{0 \leq t \leq T} d(D, l(t))$ and the Markov chain $(\mathcal{S},p_{ij})$ is given by \begin{align}\label{MCdef}\mathcal{S}=E(\Gamma\backslash\!\backslash\mathcal{T}),\quad p_{ij}=p_{e_ie_j}=\frac{\lambda([e_i,e_j])}{\lambda([e_i])}.\end{align} The stationary distribution is given by 
$ \pi_j=\lambda([e_j]).$
Recall also that $\mu_x$ is the Patterson density for $\Gamma$ based at $x$ (Definition~\ref{def:3.1}) and $\mu$ is the Gibbs measure constructed in Definition~\ref{def:BM}. 
\begin{lem}\label{lem:2} If $\Gamma$ is non-elementary and $\Gamma\backslash\mathcal{T}$ has at least one Nagao ray, then $\delta>\frac{1}{2}\log q$ and we have
$$\overline{\mu}\left(\left\{[l]\in\Gamma_f\backslash\mathcal{GT}\,|\,a_n{(l)}\le N\right\}\right)=1-\frac{q^N}{e^{2\delta N}}.$$
\end{lem}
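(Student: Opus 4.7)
The plan is to restrict the coded Markov chain of Section~\ref{sec:3} to a single Nagao ray $C$ in $\Gamma\backslash\mathcal{T}$, show that the outward transition probability along the ray is the constant $q e^{-2\delta}$, and then read off the lemma as a simple count of $N$ consecutive outward steps in the chain.

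For $\delta > \tfrac{1}{2}\log q$: the stabilizer $P\le\Gamma$ of the parabolic end $\omega_\infty\in\partial_\infty\mathcal T$ of $C$ acts on horospheres centred at $\omega_\infty$, and the ray indices $(1,q,1,q,\dots)$ force $|\{p\in P : d(y_0,py_0)\le n\}|$ to be of order $q^{n/2}$, giving $\delta_P = \tfrac{1}{2}\log q$. Since $\Gamma$ is non-elementary and geometrically finite with $P$ a cuspidal subgroup, the standard comparison of critical exponents (the tree analogue of Dalbo--Otal--Peign\'e, see \cite{BPP}) yields $\delta_\Gamma>\delta_P$.

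For the formula, fix a lift $y_0,y_1,y_2,\dots$ of $C$ in $\mathcal T$, write $f_k$ for the oriented edge from $y_k$ to $y_{k+1}$, and set $S_k=\mu_{y_k}(\mathcal O(f_k))$. For $k\ge 1$ the $q$ inward edges at $y_k$ all project to $\overline{e_{k-1}}$, so the $y_k$-stabilizer in $\Gamma$ permutes them transitively (with orbit of size $i(\overline{e_{k-1}})=q$) and the common value $\mu_{y_k}(\mathcal O(g))$ of the inward shadows is well-defined; denote it $\beta_k$. Decomposing $\mathcal O(f_k)$ at $y_{k+1}$ as $\mathcal O(f_{k+1})\sqcup\bigsqcup_{i=1}^{q-1}(\text{side shadows at }y_{k+1})$ and using the conformal identity $\mu_{y_{k+1}}(\mathcal O(f_k))=e^{\delta}S_k$ yields
\[
e^{\delta}S_k=S_{k+1}+(q-1)\beta_{k+1},
\]
while comparing total boundary mass at $y_k$ and $y_{k+1}$ gives $e^{\delta}S_k+qe^{-\delta}\beta_k=S_{k+1}+q\beta_{k+1}$; subtracting produces $\beta_{k+1}=qe^{-\delta}\beta_k$. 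The resulting upper-triangular recursion in $(S_k,\beta_k)$ has eigenvalues $e^{\delta}$ and $qe^{-\delta}$. The conformal identity $\mu_{y_0}(\mathcal O(f_k))=e^{-k\delta}S_k$, together with $\bigcap_k\mathcal O(f_k)=\{\omega_\infty\}$ and the vanishing of the Patterson mass at the parabolic point $\omega_\infty$ (a consequence of $\delta>\delta_P$), rules out the $e^{\delta}$-eigenmode and forces $S_{k+1}/S_k=qe^{-\delta}$ for every $k\ge 0$ (the boundary case $k=0$ follows by plugging the eigenmode solution back into the $k=0$ instance of the first identity). Because $i(e_k)=1$ along the ray, Bass--Serre theory gives $|\Gamma_{f_k}|=|\Gamma_{f_k,f_{k+1}}|$, so the Markov transition probability reduces to
\[
p_{e_k,e_{k+1}}=\frac{\lambda([e_k,e_{k+1}])}{\lambda([e_k])}=e^{-\delta}\cdot\frac{S_{k+1}}{S_k}=\frac{q}{e^{2\delta}}.
\]

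Finally, the $n$-th excursion reaches height $\ge N+1$ if and only if the Markov chain takes $N$ consecutive outward transitions $e_0\to e_1\to\cdots\to e_N$ once it has entered the ray; by the Markov property this event has conditional probability $(qe^{-2\delta})^N=q^N/e^{2\delta N}$, and $\phi$-invariance of $\overline\mu$ makes the answer independent of $n$. Hence $\overline\mu(\{a_n(l)\le N\})=1-q^N/e^{2\delta N}$. The main technical hurdle is the elimination of the $e^{\delta}$-eigenmode from the $(S_k,\beta_k)$-recursion, for which both non-elementariness of $\Gamma$ and the strict inequality $\delta>\tfrac{1}{2}\log q$ are essential.
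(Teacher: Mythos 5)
Your proof is correct, and its core is the same as the paper's: use the conformal property of the Patterson density together with the transitivity of vertex stabilizers on the $q$ inward edges (forced by the indices $(1,q,1,q,\dots)$) to show that the outward shadow masses decay by the factor $q e^{-2\delta}$ per step, which is exactly the excursion-survival probability. The packaging differs in two genuine ways, though. First, the logical order of $\delta>\tfrac12\log q$: the paper obtains it as an \emph{output} of the computation — the disjoint decomposition of $\mathcal O(f_{j+N})$ into side shadows gives $\mu_{x_j}(\mathcal O(f_{j+N}))=(q-1)\alpha_j\sum_{n\ge N}q^n e^{-2n\delta}$, and finiteness of the left-hand side forces convergence of the geometric series — whereas you need the inequality as an \emph{input} (to kill the $e^{\delta}$-eigenmode) and therefore import it from the parabolic critical-exponent gap $\delta_\Gamma>\delta_P=\tfrac12\log q$ of Dal'bo--Otal--Peign\'e type. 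Your route is valid (the cusp group here is divergent, so the gap theorem applies), but it leans on a heavier external result where the paper's argument is self-contained; on the other hand, your eigenmode analysis makes explicit \emph{why} non-elementarity and the exponent gap are needed, which the paper leaves implicit. Second, the paper sums the telescoping shadow decomposition in closed form, while you solve the equivalent two-term linear recursion in $(S_k,\beta_k)$ and eliminate the unwanted mode via $e^{-k\delta}S_k=\mu_{y_0}(\mathcal O(f_k))\to\mu_{y_0}(\{\omega_\infty\})=0$; these are the same computation organized differently, and your treatment of the boundary case $k=0$ and of $|\Gamma_{f_k,f_{k+1}}|=|\Gamma_{f_k}|$ (from $i(e_k)=1$) is correct. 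Finally, you route the conclusion through the explicit Markov transition probabilities $p_{e_k,e_{k+1}}=qe^{-2\delta}$, whereas the paper phrases the last step as a repetition of the shadow-ratio argument of Proposition~\ref{lem:1}; the two are equivalent, and your version has the small advantage of exhibiting the constant as a bona fide transition probability of the chain defined in \eqref{MCdef}.
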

\begin{proof} The proof is verbatim to the proof of Proposition~\ref{lem:1}, except that we need to obtain the general version of \eqref{eq:2.1}.
For $j=0, \cdots, m$, let $x_j\in V\mathcal{T}$ be the vertices satisfying $\pi(x_j)=v_{i,j}$ and $x_j,x_{j+1}$ are adjacent. For $j=1, \cdots, m$, let $f_j\in E\mathcal{T}$ such that $\partial_0f_j=x_{j-1}$ and $\partial_1f_j=x_j$. 
We need to show that for any integer $N >0$,
 $$\displaystyle\frac{\mu_{x_{i}}(\mathcal{O}({f_{i+N}}))}{\mu_{x_i}(\mathcal{O}({f_{i}}))}=\left(\frac{q}{e^{2\delta}}\right)^N.$$
 Let $v_{i,0}\cdots v_{i,m}v_{i,m-1}\cdots v_{i,1}v_{i,0}$ be the projection of $n$-th excursion of some geodesic $l\in\mathcal{GT}$ under $\pi$. 

 Let
 $\alpha_j=\mu_{x_{j}}(\mathcal{O}(\overline{f_{j}}))$. (Note that this does not depend on the choice of $x_j$). Since $\Gamma$ is non-elementary, it follows that $\mu$ has no atoms and hence $\alpha_j\ne 0$. The conformal property of $\mu$ implies that $\mu_{x_j}(\mathcal{O}(\overline{f_j}))=\mu_{x_{j+1}}(\mathcal{O}(\overline{f_j}))e^\delta$. Since there are $q$ neighbors of $x_{j}$ which projects to $v_{i,j}$, and $\Gamma_{x_j}$ acts transitively on these neighbors, we have
 \begin{equation}\label{eq:4.2}
 \alpha_{j+1} =  
 q \mu_{x_{j}+1}(\mathcal{O}(\overline{f_{j}})) = q \mu_{x_{j}}(\mathcal{O}(\overline{f_{j}})) e^{-\delta}= q \alpha_j e^{-\delta}. 
 \end{equation}

Let $e_i$ be the edge given by $\partial_0e_i=v_{i-1}$ and $\partial_1e_i=v_{i}$. Let us decompose the shadow $\mathcal O (f_{j+N})$ into countable disjoint union of sets:
$\mathcal O (f_{j+N})$ is the union of $(q-1)$ shadows $O(g_0)$ of lifts $g_0$ of $\overline{e_{j+N}}$ adjacent to $f_{j+N}$ and the shadow $\mathcal O(f_{j+N+1})$.

The shadow $\mathcal O(f_{j+N+1})$ is in turn the union of $q-1$ shadows $\mathcal O(g_1)$ of lifts $g_1$ of $\overline{e_{j+N+1}}$ adjacent to $f_{j+N+1}$ and the shadow $\mathcal O(f_{j+N+2})$. We repeat this decomposition. For any $l \geq 0$, we obtain $q-1$ shadows $\mathcal O(g_l)$'s such that
$$\mu_{x_j}(\mathcal O(g_l))= \mu_{x_j}( \mathcal O(\overline{f_{j+N+l}}) )e^{-2(N+l)\delta}=\alpha_j q^{N+l} e^{-2(N+l)\delta}$$
by the conformal property and \eqref{eq:4.2}

Therefore, for any $j,N\ge 0$, we have $$\mu_{x_j}(\mathcal{O}(f_{j+N}))=(q-1)\alpha_j\sum_{n=N}^\infty q^ne^{-2n\delta}=\frac{(q-1)\alpha_j\left(\frac{q}{e^{2\delta}}\right)^{N}}{1-\frac{q}{e^{2\delta}}}.$$
Since $\mu_{x_j}(\mathcal{O}(f_j))<\infty$, the above series must converge thus we have $\delta_{\Gamma}>\frac{1}{2}\log q$
and $\displaystyle\frac{\mu_{x_{j}}(\mathcal{O}_{f_{j+N}})}{\mu_{x_j}(\mathcal{O}_{f_{j}})}=\left(\frac{q}{e^{2\delta}}\right)^N$.
\end{proof}
Therefore, by independence of excursions (similar to the proof of Proposition~\ref{lem:1}), we obtain a limiting \emph{Galambos type formula}. 
$$\overline{\mu}\left(\left\{[l]\in\Gamma_f\backslash\mathcal{GT}|\underset{1\le j\le k}{\max}a_{j}{(l)}\le N+y\right\}\right)=\left(1-\frac{q^{N+y}}{e^{2\delta(N+y)}}\right)^k. $$

Given a geodesic $l$ in $\mathcal{GT}$, let us denote by $C{(l)}$ the expectation $$\lim_{N\to\infty}\frac{1}{N}\sum_{n=1}^{N}\left(l_{t_{2n+1}}-l_{t_{2n}}\right)$$ of the difference between the starting time of the $(n+1)$-th excursion time and the ending time of the $n$-th excursion of $l$ (the time living in the compact part) over $n\in\mathbb{Z}_{>0}$. Note that this does not depend on the choice of representative in $\Gamma\backslash \mathcal{GT}$ and the limit exists for $\mu$-almost every $[l]\in\Gamma\backslash\mathcal{GT}$.

\begin{lem}Let $C_\Gamma=\int_{\Gamma\backslash\mathcal{GT}}C{([l])}d\mu$.
The expectation with respect to $\mu$ of the time of $n$ excursions $t_{2n}-t_1$ over $\Gamma\backslash\mathcal{GT}$ is $$\left(\frac{2e^{2\delta}}{e^{2\delta}-q}+C_\Gamma\right)n.$$

\end{lem}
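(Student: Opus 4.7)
My plan is to decompose $t_{2n}-t_1$ into the durations of the $n$ excursions and the times spent in the compact part $D$ between consecutive excursions, compute each part's expectation using Lemma~\ref{lem:2} for the excursion times and the definition of $C_\Gamma$ for the compact-part times, and then sum.

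Explicitly, the $i$-th excursion occupies $[t_{2i-1},t_{2i}]$ with length $2a_i^{(l)}$, and the compact-part interlude between excursion $i$ and excursion $i+1$ is $[t_{2i},t_{2i+1}]$, so
$$ t_{2n}-t_1 = \sum_{i=1}^n 2a_i^{(l)} + \sum_{i=1}^{n-1}(t_{2i+1}-t_{2i}). $$
For the excursion heights, Lemma~\ref{lem:2} gives $\overline{\mu}(\{a_i^{(l)} > N\}) = (q/e^{2\delta})^N$ for every $N\ge 0$ together with $\delta > \frac{1}{2}\log q$, so each $a_i^{(l)}$ is integrable with summable geometric tail. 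The tail-sum formula then yields
$$\mathbb{E}_\mu(a_i^{(l)}) = \sum_{N=0}^\infty \overline{\mu}(\{a_i^{(l)} > N\}) = \sum_{N=0}^\infty \left(\frac{q}{e^{2\delta}}\right)^N = \frac{e^{2\delta}}{e^{2\delta}-q},$$
independent of $i$ by the $\phi$-invariance of $\mu$. Hence the expected total excursion time is $\frac{2ne^{2\delta}}{e^{2\delta}-q}$.

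For the compact-part contribution, $C(l)$ is by construction the $\mu$-a.e.\ limit of the Ces\`aro averages of the gap sequence $(t_{2n+1}-t_{2n})_{n\ge 1}$. Ergodicity of $(\phi,\mu)$, which is inherited from the isomorphism in Section~\ref{sec:3} with an irreducible aperiodic Markov chain and Birkhoff's theorem, gives $C(l)=C_\Gamma$ for $\mu$-a.e.\ $l$; combining this with shift-invariance, and passing to the first-return induced system on the cross-section of geodesics entering $D$ from a ray, shows that the expectation of each individual gap is $C_\Gamma$. Summing produces the total expected value $\left(\frac{2e^{2\delta}}{e^{2\delta}-q}+C_\Gamma\right)n$, up to a bounded additive constant (of order $C_\Gamma$) arising from the mismatch between $n$ excursions and $n-1$ gaps, which is negligible in the extreme-value limit. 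The main obstacle I anticipate is making this last identification rigorous, since the individual gaps $t_{2i+1}-t_{2i}$ are not literally of the form $g\circ\phi^k$ for a fixed observable $g$ and a fixed power $k$; the cleanest resolution is to apply Kac's formula together with Birkhoff's theorem to the return-time cocycle on the cross-section.
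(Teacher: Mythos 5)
Your proof is correct and follows essentially the same route as the paper: the same decomposition of $t_{2n}-t_1$ into excursion durations plus compact-part gaps, the same evaluation $\mathbb{E}_\mu(a_1)=\frac{e^{2\delta}}{e^{2\delta}-q}$ from the geometric tail in Lemma~\ref{lem:2} (you via the tail-sum formula, the paper via $\sum_k k\,\mu(a_1=k)$), and the same identification of the compact-part contribution with $C_\Gamma$. You are in fact somewhat more careful than the paper, which silently treats the Ces\`aro-limit quantity $C^{(l)}$ as if it were each individual gap and ignores the $n$ versus $n-1$ mismatch that you correctly flag as a harmless bounded discrepancy.
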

\begin{proof}
Since the Markov chain associated to the compact part is finite, it is positive recurrent (Chapter 10, \cite{MT}).
Hence, the constant $C_\Gamma$ is finite and depends only on the structure of quotient graph $\Gamma\backslash\mathcal{T}$ and the choice of the compact part $D$. 

The expectation with respect to $\mu$ of $t_{2n}-t_1$ of $l$ is
\begin{align*}&\mathbb{E}_\mu \left(\sum_{i=1}^n(2a_i^{(l)}+C^{(l)})\right)=n(2\mathbb{E}_\mu(a_1)+\mathbb{E}_\mu C^{(l)})=C_\Gamma n+2n\sum_{k=1}^{\infty}k\mu(a_1=k)\\
=&C_\Gamma n+\sum_{k=1}^{\infty}\frac{2nk(\frac{e^{2\delta}}{q}-1)}{(\frac{e^{2\delta}}{q})^k}=C_\Gamma n+\sum_{k=1}^\infty \frac{2nk(e^{2\delta}-q)q^{k-1}}{e^{2\delta k}}=\left(\frac{2e^{2\delta}}{e^{2\delta}-q}+C_\Gamma\right)n.
\end{align*} This completes the proof of the lemma.
\end{proof}

By the similar argument deriving Theorem~\ref{main} from Proposition~\ref{prop:2.6}, we finally have that
$$\lim_{N\to\infty}\mu\left(\left\{[l]\in\Gamma\backslash\mathcal{GT}|\underset{{0\le t\le T}}{\max}h_T^{(l)}\le N+y\right\}\right)=e^{-q^y/e^{2\delta y}}$$
with $$T=\left(\frac{2e^{2\delta}}{e^{2\delta}-q}+C_\Gamma\right)\frac{e^{2\delta N}}{q^N}.$$
Therefore, $$\lim_{T\to\infty}\mu\left(\left\{[l]\in\Gamma\backslash\mathcal{GT}|h_T^{(l)}\le \log_{e^{2\delta}/q}\left(\frac{T(e^{2\delta}-q)}{2e^{2\delta}-C_\Gamma(e^{2\delta}-q)}\right)+y\right\}\right)=e^{-q^y/e^{2\delta y}}.$$

\textbf{Acknowledgement:} We would like to thank J. Athreya, A. Ghosh, and M. Kirsebom for helpful discussions and explaining the problem and their work. The first author is supported by Samsung STF Project no. SSTF-BA1601-03.

\par

\end{document}